\newtheorem{theorem}{Theorem}[section]
\newtheorem{cor}[theorem]{Corollary}
\newtheorem{conjecture}[theorem]{Conjecture}
\theoremstyle{definition}
\newtheorem{definition}[theorem]{Definition}
\theoremstyle{remark}
\numberwithin{equation}{section}
\title{A note on the equivariant formal group law of the equivariant complex cobordism ring}
\author{William C. Abram}
\address{Department of Mathematics, Hillsdale College,
Hillsdale, MI 49242 USA}
\email{wabram@hillsdale.edu}
\date{March 26, 2015}
\thanks{
This work was supported by an NSF Graduate Research Fellowship.
MSC 2010 classification: 55N22, 55N91, 14L05, 57R85}
\begin{document}

\begin{abstract}
For a finite abelian group $G$, we give a computation of the $G$-equivariant formal group law corresonding to the $G$-equivariant complex cobordism spectrum $MU_G$ with its canonical complex orientation.
\end{abstract}

\maketitle


%
%
%

\section{Introduction}

The goal of the present paper is to provide an algebraic description of the equivariant formal group law corresponding to the equivariant complex cobordism ring $MU_G$ for a finite abelian group $G$, which is given as Theorem ~\ref{maintheorem}, and is adapted from the author's doctoral thesis \cite{Abram13}. The paper \cite{AKprep} of the author and Kriz provides the corresponding algebraic computation of the coefficient ring $(MU_G)_*$ which facilitates this work. As in \cite{AKprep}, our description is in terms of pullback diagrams. The computation of \cite{AKprep} follows several other papers which contribute to the present algebraic understanding of the ring $(MU_G)_*$, namely the computations of Sinha \cite{Sinha01}, Strickland \cite{Strickland01}, and Kriz \cite{Kriz99}.

One motivation for pursuing this computation is the conjecture of Greenlees \cite[Conjecture 2.4]{Greenlees01} that the coefficient ring of the equivariant complex cobordism ring classifies equivariant formal group laws in the same way that non-equivariant complex cobordism classifies non-equivariant formal group laws, vis \'a vis Quillen's Theorem \cite{Quillen69} (c.f.e. \cite[Theorem 1.3.2]{Ravenel04}). This is  will be made precise in Section ~\ref{efgl} when we state the definition of an equivariant formal group law. Greenlees \cite{Greenlees01} shows that the equivariant complex cobordism ring classifies equivariant formal group laws over Noetherian rings, but the general conjecture is still open. The algebraic description of $(MU_G)_*$ in \cite{AKprep} and the corresponding description here of its equivariant formal group law are intended to partially illuminate the questions posed by the paper \cite{Greenlees01} of Greenlees.

In Section  ~\ref{cobordism} below we recall the computation of $(MU_G)_*$ given in \cite{AKprep}.
In Section ~\ref{efgl} we define equivariant formal group laws and provide the relevant background. 
In Section ~\ref{efglgeneralcase} we compute the equivariant formal group law of $MU_G$ for $G$ a finite abelian group. The main result of this paper is Theorem ~\ref{maintheorem}. To illuminate the description, in Section ~\ref{zpnefgl} we treat the example case $MU_{\mathbb{Z}/p^n}$ of a finite cyclic $p$-group.

\subsection*{Acknowledgements} The author would like to thank Igor Kriz for his mentorship on this project, and Kyle Ormsby, who read this work carefully as it appeared in the author's thesis. The author is also grateful to Andrew Blumberg, Jeanne Duflot, Dan Freed, Paul Goerss, Nitu Kitchloo, Tyler Lawson, J. Peter May, Jack Morava, and others for enlightening discussions about the equivariant complex cobordism ring. This work was supported by a National Science Foundation Graduate Research Fellowship, and much of the work was conducted at the University of Michigan, whose support the author acknowledges. 

\section{The equivariant complex cobordism ring of a finite abelian group} \label{cobordism}

Throughout this paper, fix a finite abelian group $G$. We recall now the computation of $(MU_G)_*$ given in \cite{AKprep}, which is given in that paper as the combination of Theorem 1 and Theorem 2. First we need some definitions. Let 
\begin{equation} P(G) = \{ \{ H_1 \subsetneq H_2 \subsetneq \cdots \subsetneq H_k\} | H_i \subset G \text{ is a subgroup for every } i\}.
\end{equation}
That is, elements of $P(G)$ are increasing chains of subgroups of $G$. Now suppose $S = \{ H_1 \subsetneq H_2 \subsetneq \cdots \subsetneq H_k\} \in P(G)$, and let $H_0 = \{e\}$, $H_{k+1} = G$. For $0 \leq j \leq k$, let $R_j$ be $G/H_j$-representatives of the nontrivial complex $H_{j+1}/H_j$ representations. We associate to the chain of subgroups $S$ a ring
\begin{align*} A_S = MU_*[u_L&,u_M^{-1},u_N^{(i)} | i > 0, L \in R_0 \coprod \cdots \coprod R_k, \\ &M \in R_0 \coprod \cdots \coprod R_{k-1}, N \in R_0],
\end{align*}
where of course $MU_*$ is the nonequivariant complex cobordism ring, whose algebraic structure was computed by Milnor \cite{Milnor60}.

We can topologize $A_S$ as follows. Say that a sequence of monomials from $A_S$
\[ \left< a_t \prod_{L \in R_1 \coprod \cdots \coprod R_k} u_L^{n(L,t)}  \right>_{t=1}^\infty
\]
with $0 \neq a_t \in MU_*[u_L^{\pm 1}, u_L^{(i)} |i > 0, L \in R_0]$ converges to $0$ if there is a $j \in \{1, \ldots, k\}$ such that $n(L,t)$ is eventually constant in $t$ when $L \in R_i$ and $i> j$, and 
\[ n(L,t) \rightarrow \infty \text{ as } t \rightarrow \infty
\]
for $L \in R_j$. A sequence $\left< p_t \right>$ of polynomials from $A_S$ converges to $0$ if and only if every choice of nonzero monomial summands $m_t$ from $p_t$ gives a sequence of monomials $\left< m_t \right>$ that converges to $0$. We can now define a topology $\mathcal{T}_S$ on $A_S$ by saying that $C \subset A_S$ is \emph{closed} with respect to $\mathcal{T}_S$ if and only if the limit of every sequence in $C$ convergent in $A_S$ is in $C$. 

We will consider the completion $(A_S)_{\widehat{\mathcal{T}}_S}$ of the ring $A_S$ with respect to the topology $\mathcal{T}_S$. Let $F$ denote the universal formal group law. We define an ideal by 
\begin{align*} I_S = \bigg(     u_{L_1} &+_F u_{L_2} - \left(\sum_{i=1}^m\right)_F u_{M_i} \text{ } \bigg| \text{ }L_1L_2 \cong \prod_{i=1}^m M_i \text{ and there is a } j \in \{1, \ldots, k\} \\ &\text{ s.t. } L_1,L_2 \in R_j \text{ and } M_i \in R_j \coprod \cdots \coprod R_k     \bigg).
\end{align*}
Note that the definitions of $A_S$, $\mathcal{T}_S$, and $I_S$ above depend on the group $G$, so when necessary for clarity we will denote these by $A_{G,S}$, $\mathcal{T}_{G,S}$ and $I_{G,S}$, respectively.

\begin{theorem}{(Abram and Kriz)} \label{ringcomp} For $G$ a finite abelian group, we have
\[(MU_G)_* \cong \varprojlim (A_S)_{\widehat{\mathcal{T}}_S}/I_S.
\]
\end{theorem}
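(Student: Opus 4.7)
The plan is to build an isomorphism from $\varprojlim (A_S)_{\widehat{\mathcal{T}}_S}/I_S$ to $(MU_G)_*$ by constructing compatible ring maps chain by chain and then showing they assemble into the desired isomorphism. For each chain $S = \{H_1 \subsetneq \cdots \subsetneq H_k\}$, I would identify canonical elements of $(MU_G)_*$ playing the role of the named generators: each $u_L$ is the equivariant Euler class of the line bundle associated to the representation $L$; each $u_M^{-1}$ is the formal inverse of such an Euler class, meaningful after the partial geometric fixed point localization prescribed by the chain $S$; and the $u_N^{(i)}$ are the coefficients of the power series giving the formal group law data at the bottom of the chain, where $N \in R_0$.

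I would then verify that the formal group law relations defining $I_S$ hold in $(MU_G)_*$. Each such relation expresses the fact that when a tensor product $L_1 \otimes L_2$ of line bundles is isomorphic as a representation to $M_1 \otimes \cdots \otimes M_m$, the Euler classes satisfy $u_{L_1} +_F u_{L_2} = (\sum_i)_F u_{M_i}$; this is built into any complex orientable cohomology theory and persists through the partial localizations relevant to $S$.

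To upgrade these local maps to a map out of the inverse limit, I would check compatibility under refinement of chains $S \subset S'$. Showing the assembled map is an isomorphism I would attempt by induction on $|G|$, using an isotropy separation pullback square for $(MU_G)_*$ that expresses it in terms of $(MU_{G/H})_*$ for proper subgroups $H$ and a localized piece obtained by inverting Euler classes of faithful representations. The inductive hypothesis handles the smaller $(MU_{G/H})_*$ factors, which correspond to chains in $P(G)$ whose bottom element is $H$; the localized piece is governed by the computations of Sinha \cite{Sinha01} and Strickland \cite{Strickland01} and matches the contribution of chains starting at the trivial subgroup. Matching the two decompositions chain by chain would yield the isomorphism.

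The principal obstacle is reconciling the formal topology $\mathcal{T}_S$ with the geometric notion of convergence inside $(MU_G)_*$. Infinite sums of Euler class monomials arise naturally in $(MU_G)_*$ through the skeletal filtrations on classifying spaces used in pushforward and transfer constructions, and one must check that precisely the sequences called convergent in $\mathcal{T}_S$---those whose exponents on Euler classes in the topmost active stratum $R_j$ of the chain tend to infinity while exponents in higher strata stabilize---are exactly the sequences that converge geometrically in $(MU_G)_*$. This will require delicate bookkeeping with the isotropy filtration and very likely calls for a Greenlees--May-type completion argument identifying local cohomology at the relevant augmentation ideals with the equivariant cobordism groups.
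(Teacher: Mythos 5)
You should first be aware that this paper does not prove Theorem~\ref{ringcomp} at all: it is imported verbatim from the preprint \cite{AKprep} (as the combination of Theorems 1 and 2 there), so there is no in-paper proof to measure your proposal against. Judged on its own terms, your skeleton --- Euler classes of line bundles as the $u_L$, formal-group-law relations among Euler classes of tensor products giving $I_S$, and an induction on $|G|$ via isotropy separation --- is the standard and essentially correct frame, and it is consistent with the shape of the answer (compare the iterated pullback in Corollary~\ref{zpnringcomp}, which is exactly an iterated isotropy-separation tower for $\mathbb{Z}/p^n$).

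However, there are genuine gaps. First, the inverse limit in the statement is indexed by the poset $P(G)$ of \emph{chains} of subgroups, and you never construct this inverse system: an isotropy separation square involves one subgroup (or one family) at a time, and the real combinatorial content of the theorem is how iterating such squares over all possible flags of subgroups assembles into a single $\varprojlim$ with specified structure maps, together with a verification that no derived-limit correction terms intervene. ``Matching the two decompositions chain by chain'' restates the conclusion rather than proving it. Second, the generators $u_N^{(i)}$ for $N\in R_0$ are not merely ``formal group law data at the bottom of the chain''; they are the polynomial generators coming from the tom Dieck--L\"offler-style computation of the geometric fixed points of $MU_G$ (this is where Sinha's and Strickland's work actually enters), and without pinning this down you cannot verify that $I_S$ exhausts the relations, i.e.\ that your map is injective. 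Third, you explicitly defer the identification of the topology $\mathcal{T}_S$ with geometric convergence in $(MU_G)_*$ to ``a Greenlees--May-type completion argument''; but that completion statement is precisely the hard analytic core of the theorem --- it is what makes the power-series rings $(A_S)_{\widehat{\mathcal{T}}_S}$ rather than polynomial rings appear --- so leaving it as an acknowledged obstacle means the proof is not complete at its most delicate point.
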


This computation is illuminated somewhat by the following corollary of Theorem ~\ref{ringcomp} and the exposition of \cite{AKprep}, which corollary appears in \cite{Abram13}. The notation $[k]_F x$ denotes the $k$-fold sum $x +_F x +_F \cdots +_F x$ of $x$ under the nonequivariant universal formal group law.

\begin{cor} \label{zpnringcomp} Let $u_{[k]}$ denote $[p^k]_Fu$, and 
\[R_k = MU_*[u_j,u_j^{-1},b_j^{(i)} | i > 0, j \in \{1,2,\ldots,p^k-1\}][[u_{[k]}]]/([p^{n-k}]_Fu_{[k]}),\]
\[S_k = MU_*[u_j,u_j^{-1},b_j^{(i)} | i > 0, j \in \{1,2,\ldots,p^k-1\}][[u_{[k]}]]/([p^{n-k}]_Fu_{[k]})[u_{[k]}^{-1}],\]
\[R^n = MU_*[u_j,u_j^{-1},b_j^{(i)} | i >0, j \in \{1,2,\ldots,p^n-1\}]. \]
Then $(MU_{\mathbb{Z}/p^n})_*$ is the $n$-fold pullback of the diagram of rings
\begin{equation} \label{zpnringcompeq} \xymatrix{& & & & R^n \ar[d]^{\phi_{n-1}} \\
& & &  R_{n-1} \ar[r]^{\psi_{n-1}} \ar[d]^{\phi_{n-2}} & S_{n-1} \\
& & R_2 \ar[d]^{\phi_1} \ar[r]^\cdots & S_{n-2} & & \\
& R_1 \ar[d]^{\phi_0} \ar[r]^{\psi_1} & S_1 & & \\
R_0 \ar[r]^{\psi_0} & S_0. & & &\\
\\}
\end{equation}
The maps $\psi_k$ are localization by inverting $u_{[k]}$, and the maps $\phi_k$ are determined by the properties of sending $u_{[k+1]}$ to $[p]_F u_{[k]}$ and $b_j^{(i)}u_j$ to the coefficient of $x^i$ in $x +_F [j]_F u_{[k]}$. $\phi^{n-1}$ is determined by the property of sending $b_j^{(i)}u_j$ to the coefficient of $x^i$ in $x +_F [j]_F u_{[k]}$.
\end{cor}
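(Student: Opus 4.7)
The plan is to specialize Theorem~\ref{ringcomp} to $G = \mathbb{Z}/p^n$ and rewrite the resulting limit as the zigzag pullback of the corollary. Since the subgroup lattice of $\mathbb{Z}/p^n$ is totally ordered as $\{e\} = K_0 \subsetneq K_1 \subsetneq \cdots \subsetneq K_n = G$ with $|K_j| = p^j$, every chain $S \in P(\mathbb{Z}/p^n)$ is determined by its set of subgroups. My first step is to identify, for each $0 \leq k \leq n$, the singleton chain $\{K_k\}$ with the ring $R_k$ of the corollary, reading $R_n = R^n$ by the convention that $[p^0]_F u_{[n]} = u_{[n]} = 0$ kills the power-series direction. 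For $1 \leq k \leq n-1$, the theorem-sense $R_1$ is the set of $p^{n-k}-1$ nontrivial characters of $G/K_k$; the formal-group-law relations in $I_{\{K_k\}}$ collapse these to a single topological generator $u_{[k]}$ satisfying $[p^{n-k}]_F u_{[k]} = 0$, matching the power-series direction in $R_k$. The theorem-sense $R_0$ consists of the characters of $G$ nontrivial on $K_k$, and the relations in $I_{\{K_k\}}$ reduce them, modulo characters of $G/K_k$, to exactly $p^k - 1$ equivalence classes. These become the inverted variables $u_j$ for $j = 1, \ldots, p^k - 1$, while the Theorem~\ref{ringcomp} variables $u_L^{(i)}$ reduce to the auxiliary variables $b_j^{(i)}$.

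Second, I would interpret each $S_k$ and the maps $\psi_k, \phi_k$ in chain-theoretic terms. The localization $\psi_k$ at $u_{[k]}$ corresponds to refining the chain $\{K_k\}$ by adjoining $K_{k+1}$: this refinement splits the power-series direction into a smaller power series for characters of $G/K_{k+1}$ and an inverted piece for those nontrivial on $K_{k+1}/K_k$, which is exactly inversion of $u_{[k]}$. The map $\phi_k: R_{k+1} \to S_k$ records the complementary refinement of $\{K_{k+1}\}$ by adjoining $K_k$; the formula $u_{[k+1]} \mapsto [p]_F u_{[k]}$ is the statement that the generator of characters of $G/K_{k+1}$ is the $p$-th formal-group-law multiple of the generator of characters of $G/K_k$, and the rule on $b_j^{(i)} u_j$ is the expansion of $x +_F [j]_F u_{[k]}$ in powers of $x$.

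Finally, I would reduce the full limit of Theorem~\ref{ringcomp} to the zigzag pullback by a cofinality argument: every chain $S \in P(\mathbb{Z}/p^n)$ is built from the singleton chains $\{K_k\}$ and two-element chains $\{K_k, K_{k+1}\}$ appearing in the zigzag, and the natural maps from chain refinements factor through those already displayed. I anticipate this cofinality step to be the main technical obstacle, since it requires careful tracking of how the topological completions $\widehat{\mathcal{T}}_S$ interact across chains of different lengths and how the maps induced by chain inclusions $S \subset S'$ are compatible with the ideals $I_S$ on both sides. The remainder of the proof is routine bookkeeping of the character-theoretic and formal-group-law identifications established in the first two steps.
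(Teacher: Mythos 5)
You should first note that the paper contains no proof of this corollary at all: it is stated as a consequence of Theorem~\ref{ringcomp} ``and the exposition of \cite{AKprep}'', and is attributed to the thesis \cite{Abram13}, so there is no in-text argument to measure your plan against. That said, your overall strategy is surely the intended one: specialize Theorem~\ref{ringcomp} to the totally ordered subgroup lattice of $\mathbb{Z}/p^n$, match the singleton chains $\{K_k\}$ with the rings $R_k$ (your reading of $R^n$ as the degenerate case $[p^0]_F u_{[n]} = u_{[n]}$ is correct), and collapse the limit over $P(\mathbb{Z}/p^n)$ to the displayed zigzag. Your character counts ($p^{n-k}-1$ characters in the completed direction, $p^k-1$ inverted classes) are right.

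Two steps you describe as bookkeeping are, however, the actual content, and as sketched they would not go through. First, the nodes $S_k$ are not literally chain rings: $S_0 = MU__*[[u]]/([p^n]_Fu)[u^{-1}]$ carries no $u_j$ or $b_j^{(i)}$ generators and so is not $(A_{S'})_{\widehat{\mathcal{T}}_{S'}}/I_{S'}$ for any chain $S'$, and for $k\geq 1$ the ring of the chain $\{K_k \subsetneq K_{k+1}\}$ is completed in the $u_{[k+1]}$-direction with $u_{[k]}$ inverted as a polynomial generator, whereas $S_k$ is completed at $u_{[k]}$ first and localized afterwards. Reconciling these (and checking that $[j]_F u_{[k]}$ is a unit in $S_k$ for every $j \leq p^{k+1}-1$, which is needed for $\phi_k$ to be defined on $u_j^{-1}$) requires the relation $[p^{n-k}]_F u_{[k]}=0$ and a genuine computation, not an identification of generating sets. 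Second, the reduction to the zigzag cannot be a cofinality argument in the formal sense: since the $S_k$ are not objects of the $P(G)$-diagram, the zigzag is not a subdiagram of it, and even ignoring that, the subdiagram of singletons and adjacent pairs is not initial in the refinement poset (for a chain such as $\{K_1,K_3\}$ the relevant comma category need not be connected, as the intermediate pairs $\{K_1,K_2\}$ and $\{K_2,K_3\}$ do not refine to it). What is needed is the ring-theoretic statement that the ring of every longer chain is determined by iterated localization and completion of the displayed ones, so that the remaining objects impose no further conditions on the limit; that is precisely the input from \cite{AKprep} that the corollary leans on. You also leave unexplained where the generators $u_N^{(i)}$, $N \in R_0$, present in $A_S$ for every chain, go in the corollary's $R_0$ and $R_k$, where only the combinations $b_j^{(i)}$ attached to invertible $u_j$ survive.
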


\section{Equivariant formal group laws} \label{efgl}

 Let $G$ be a finite abelian group, and $E \in G \mathcal{S} \mathcal{U}$ a $G$-equivariant spectrum. Let $\widehat{G}$ denote the character group of $G$.  Cole, Greenlees, and Kriz \cite{CGK00} make the following definition.

\begin{definition} \label{efgl2} A $G$-equivariant formal group law over a commutative ring $k$ consists of:
\begin{enumerate}[(1)]
\item A $k$-algebra $R$ complete at an ideal $I$ and a cocommutative, coassociative, counital comultiplication 
\[ \Delta: R \rightarrow R \hat{ \otimes } R \text{   } \text{   } \text{   } (R \hat{ \otimes } R = (R \otimes R)_{\hat{I} \otimes R \oplus R \otimes I}),
\]
\item An $I$-continuous map of $k$-algebras $\epsilon : R \rightarrow  k^{G^*}$ compatible with comultiplication:
\[\xymatrix{
R \ar[d]_{\epsilon} \ar[r]^{\Delta} & R \hat{ \otimes } R \ar[d]^{\epsilon \hat{\otimes} \epsilon} \\
k^{G^*}  \ar[r]_{\psi \text{ } \text{ } \text{ } \text{ } \text{ } \text{ } \text{ } \text{ } \text{ }}  & k^{G^*} \otimes  k^{G^*} ,\\} \]
\item A system of elements $x_L \in R$, $L \in \hat{G}$ such that 
\[ x_L \text{ is regular for each } L \in \hat{G},
\]
\[R/(x_L) \cong E_* \text{ for each } L \in \hat{G},
\]
\[ I = \left(\prod_{L \in \hat{G}} x_L \right),
\]
and 
\[ x_L = (\epsilon(L) \otimes 1)\Delta(x_1) \text{  for  } L \in \hat{G}.
\]
\end{enumerate}
\end{definition}
One can adapt this definition to the case where $G$ is a compact Lie group by replacing the ideal $I$ above with the system of finite product ideals $(\prod_i x_{L_i})$ (cf. \cite{CGK00}). Note that Definition ~\ref{efgl2} gives an equivariant analog of a formal group scheme over $R$, which in general need not be smooth in the sense that it need not be representable by a formal power series algebra over $R$. For a discussion of the non-equivariant comparison between formal group laws and formal group schemes, see Section 37.3 of Hazewinkel's book \cite{Hazewinkel78} on formal groups.

Greenlees makes the following conjecture. 

\begin{conjecture}{(Greenlees, \cite{Greenlees01})} \label{Gconj} (Greenlees, \cite{Greenlees01}) For any complex oriented $G$-equivariant spectrum $E$ there is a unique homomorphism of rings $\theta : MU_G^* \rightarrow E^*$ such that $\theta$ induces maps that send the structures $(1)$, $(2)$, and $(3)$ for the canonical equivariant formal group law corresponding to $MU_G$ to the corresponding structure for $E$.
\end{conjecture}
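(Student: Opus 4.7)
The plan hinges on the explicit presentation of $(MU_G)_*$ from Theorem~\ref{ringcomp}. Given a complex-oriented $G$-spectrum $E$, the orientation produces Euler classes $e(L) \in E^*_G$ for each $L \in \widehat{G}$ together with an underlying nonequivariant formal group law $F_E$ on $E_*$, classified by a map $MU_* \to E_*$. The natural prescription is to send each generator $u_L$ of $A_S$ to the Euler class $e(L)$, each $u_M^{-1}$ to the inverse (which exists in the localized piece of $E^*_G$ corresponding to the stratum indexed by $S \in P(G)$), and each coefficient generator $u_L^{(i)}$ for $L \in R_0$ to the $i$-th coefficient in $x$ of the power series $x +_{F_E} e(L)$. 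With this assignment the structures $(1)$, $(2)$, $(3)$ of Definition~\ref{efgl2} for $MU_G$ are sent to their counterparts for $E$ by construction.

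To verify that this defines a ring map at each chain $S \in P(G)$, one would check three things. First, the relations generating $I_S$ translate into identities $e(L_1) +_{F_E} e(L_2) = \sum_{F_E} e(M_i)$ whenever $L_1 \otimes L_2 \cong \prod M_i$; these are consequences of the standard formula $e(L \otimes L') = e(L) +_{F_E} e(L')$ in a complex-oriented theory. Second, the topology $\mathcal{T}_S$ must map to the filtration on $E^*_G$ by products of Euler classes of representations drawn from $R_j \coprod \cdots \coprod R_k$, so that $\mathcal{T}_S$-convergent sequences map to sequences with limits in $E^*_G$. Third, the resulting maps must assemble into a compatible system over the indexing category $P(G)$, which follows from naturality of Euler classes under restriction along the subgroup inclusions $H_j \subset H_{j+1}$. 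Passing to the inverse limit then yields $\theta: MU_G^* \to E^*_G$. Uniqueness is immediate: any ring map compatible with structures $(1)$--$(3)$ must send each Euler class of $MU_G$ to the corresponding Euler class of $E$, and these together with the nonequivariant formal group law generate the image under the presentation of Theorem~\ref{ringcomp}.

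The principal obstacle lies in the second step above. In the Noetherian setting treated by \cite{Greenlees01}, the Euler class filtration on $E^*_G$ is well-behaved, and the continuity and completeness required to pass from the individual pieces $(A_S)_{\widehat{\mathcal{T}}_S}/I_S$ to their inverse limit can be read off from standard properties of complete Noetherian rings. For arbitrary complex-oriented $E$, however, $E^*_G$ need not be complete with respect to the filtration generated by products of Euler classes, so $\mathcal{T}_S$-convergent sequences in $A_S$ may map to Cauchy sequences in $E^*_G$ with no limit, and the factorization through the completion $(A_S)_{\widehat{\mathcal{T}}_S}$ can fail. A full proof of Conjecture~\ref{Gconj} must therefore either derive a completeness statement for $E^*_G$ from the axioms of complex orientation -- perhaps via a version of the equivariant completion theorem -- or reformulate the target of $\theta$ to land in an appropriate completion of $E^*_G$. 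This missing completeness mechanism is precisely what keeps the conjecture open outside of the Noetherian regime.
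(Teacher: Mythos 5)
You have attempted to prove a statement that the paper itself does not prove and explicitly presents as open: Conjecture~\ref{Gconj} is Greenlees's conjecture, and the paper states that the only settled case is $G=\{e\}$ (Quillen's theorem), with Greenlees's own result covering only equivariant formal group laws over Noetherian rings. So there is no proof in the paper to compare against, and your proposal should be judged on whether it closes the conjecture. It does not, and to your credit you say so in your final paragraph: the completeness of $E_G^*$ with respect to the Euler-class filtration is a genuine obstruction, not a technicality, and identifying it does not remove it.

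Beyond the gap you flag, there are at least two further problems with the strategy. First, the pullback presentation of Theorem~\ref{ringcomp} is special to $MU_G$: its corners $(A_S)_{\widehat{\mathcal{T}}_S}/I_S$ involve inverting Euler classes and completing, operations justified for $MU_G$ by the geometric-fixed-point analysis of \cite{AKprep}. For a general complex oriented $E$ there is no ``localized piece of $E_G^*$ corresponding to the stratum $S$'' in which $e(M)^{-1}$ lives, so the maps $f_S:(A_S)_{\widehat{\mathcal{T}}_S}/I_S\to E^*$ you want to assemble need not exist at all; Euler classes in $E_G^*$ need not be regular, let alone invertible after passing to any natural quotient of $E_G^*$. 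Second, well-definedness and uniqueness both require knowing that every relation holding in $(MU_G)_*$ is a formal consequence of the equivariant formal group law axioms --- equivalently, that the surjection $v:L_G\to MU_G^*$ recorded in the paper is an isomorphism. Greenlees's partial theorem says only that $\ker v$ is Euler-torsion, Euler-divisible and $\mathbb{Z}$-torsion; showing this kernel vanishes (or that it maps to zero in every $E^*$) is precisely the open content of the conjecture, and your argument assumes it implicitly when you assert that the prescribed images of the generators ``generate the image under the presentation of Theorem~\ref{ringcomp}.'' In short: the outline records the expected shape of a proof, but the two essential inputs --- a completeness/regularity statement for Euler classes in arbitrary complex oriented $E_G^*$, and control of $\ker(L_G\to MU_G^*)$ --- are exactly what is missing from the literature, and they are missing from your argument as well.
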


In \cite[Example 11.3(i)]{CGK00} it is shown that  a complex orientation on a $G$-equivariant spectrum $E$ over the complete universe $\mathcal{U}$ specifies  a $G$-equivariant formal group law with $k = E^*$ and $R = E^* \mathbb{C}P^\infty_G$. Here $\mathbb{C}P_G^\infty$ is the complex projective space on the complete $G$-universe. Our goal is to follow the construction of \cite{CGK00} to compute the equivariant formal group law corresponding to the equivariant complex cobordism spectrum $MU_G$, for the case where $G$ is a finite abelian group. We can do this because $MU_G^*$ has a canonical complex orientation.

A partial result toward this conjecture has been proved by Greenlees.

\begin{theorem}{(Greenlees, \cite{Greenlees01})} For any finite abelian group $G$, let $L_G$ be the $G$-equivariant Lazard ring classifying $G$-equivariant formal group laws. The canonical homomorphism
\[
v: L_G \rightarrow MU_G^*
\]
is surjective and the kernel is Euler torsion, Euler-divisible and $\mathbb{Z}$-torsion.
\end{theorem}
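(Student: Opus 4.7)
The plan is to combine the universal property of the equivariant Lazard ring $L_G$ with the explicit pullback presentation of $(MU_G)_*$ from Theorem~\ref{ringcomp}. The map $v$ itself is produced by universality: the canonical complex orientation on $MU_G$ induces a $G$-equivariant formal group law over $MU_G^*$ as in \cite[Example 11.3(i)]{CGK00}, and this is classified by a unique ring map $v:L_G\to MU_G^*$.

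\textbf{Surjectivity.} Writing $(MU_G)_*\cong\varprojlim_S(A_{G,S})_{\widehat{\mathcal T}_S}/I_{G,S}$, I would show that each generator of each $A_{G,S}$ lifts to $L_G$. The Euler classes $u_L$ lift to the structural elements $x_L$ from Definition~\ref{efgl2}(3); the expansion coefficients $u_N^{(i)}$ lift to coefficients of $x_{L_1}+_F x_{L_2}$ in the universal equivariant FGL; the inverses $u_M^{-1}$ are realized after passage to an appropriate Euler-class localization of $L_G$, which is legitimate because in the stratum indexed by $S$ the Euler class of $M$ becomes a unit. The relations $I_{G,S}$ are instances of the universal identity $x_{L_1\otimes L_2}=x_{L_1}+_F x_{L_2}$, which holds in $L_G$ by definition. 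After checking compatibility with $\mathcal T_S$-completion and with the transition maps of the inverse system, image-closure yields surjection onto each pullback factor and hence onto the limit.

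\textbf{Kernel.} Given $\lambda\in\ker v$, I would trace its vanishing through the pullback. Vanishing after inverting an Euler class $u_M$ forces $\lambda$ to be annihilated by a product of Euler classes, giving the Euler-torsion piece; vanishing in the $\mathcal T_S$-completion forces $\lambda$ to lie in arbitrarily high powers of the finite-product Euler-class ideal, giving the Euler-divisible piece; and whatever residual vanishing remains is attributable to the integral torsion that enters through Milnor's description of $MU_*$ together with the completion step, accounting for the $\mathbb Z$-torsion. The three attributes are extracted in parallel, not in series, because they may apply to the same element.

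\textbf{Main obstacle.} The subtle step is the Euler-divisible portion: $L_G$ carries no completion corresponding to $\mathcal T_S$, so showing that an infinitely divisible kernel element really admits Euler-class factorizations inside $L_G$ itself requires producing explicit divisibility witnesses there. I would construct them inductively along chains $S\in P(G)$, at each level applying the formal group law machinery of $L_G$ to peel off successive Euler-class factors from $\lambda$, and using the generator-by-generator match-up from the surjectivity argument in reverse to certify that each peeled factor genuinely lives in $L_G$ rather than merely in the completion. Reconciling this with the $\mathbb Z$-torsion conclusion requires the technical observation that the Milnor torsion cannot survive either operation without becoming Euler-torsion or Euler-divisible, which is where a careful bookkeeping on the image of $MU_*\hookrightarrow L_G$ becomes indispensable.
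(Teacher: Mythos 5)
The paper does not prove this statement at all: it is quoted from Greenlees's preprint \cite{Greenlees01} as background for Conjecture~\ref{Gconj}, so there is no internal argument to compare yours against. Judged on its own terms, your proposal has several genuine gaps. First, in the surjectivity step, the inference ``surjection onto each pullback factor and hence onto the limit'' is false in general: $\varprojlim (A_S)_{\widehat{\mathcal{T}}_S}/I_S$ sits inside the product as the subring of compatible families, and hitting each factor separately does not show that every compatible family is hit. Relatedly, realizing $u_M^{-1}$ ``after passage to an appropriate Euler-class localization of $L_G$'' changes the domain of the map; it would at best prove surjectivity of a localized map, not of $v$ itself, and the theorem asserts the latter. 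Second, the $\mathbb{Z}$-torsion conclusion cannot come from ``the integral torsion that enters through Milnor's description of $MU_*$,'' because Milnor's theorem (cited in Section~\ref{cobordism}) gives $MU_*$ as a polynomial ring over $\mathbb{Z}$ --- it is torsion-free. Showing that $\ker v$ is $\mathbb{Z}$-torsion amounts to showing $v$ is rationally injective, which is a statement about $L_G \otimes \mathbb{Q}$ and requires its own argument.

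Third, you correctly identify Euler-divisibility as the hard point, but the proposed fix --- inductively ``peeling off Euler-class factors'' of a kernel element inside $L_G$ along chains in $P(G)$ --- presupposes precisely the structural control over $L_G$ that is missing (the paper itself emphasizes in its closing section that $L_G$ is poorly understood); as written this step restates the claim rather than proving it. More broadly, routing the proof through Theorem~\ref{ringcomp} is a legitimate strategy in principle, but the completion $\widehat{\mathcal{T}}_S$ and the inverse limit introduce elements (infinite sums of monomials in Euler classes) for which you give no mechanism of lifting to $L_G$, and this is where the real content of Greenlees's theorem lies.
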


We now proceed to describe the equivariant formal group law corresponding to $MU_G$ for $G$ a finite abelian group, with an eye toward Greenlees's Conjecture ~\ref{Gconj}.

\section{The equivariant formal group law of $MU_G$} \label{efglgeneralcase}

Our description of the equivariant formal group law of $MU_G$ is a consequence of Theorem ~\ref{ringcomp} and Definition ~\ref{efgl2}, and the description is similar in nature to that of Theorem ~\ref{ringcomp}. Before we can state our main theorem, we need a few definitions. For $S = \{ H_1 \subsetneq H_2 \subsetneq \cdots \subsetneq H_k\} \in P(G)$, $H_0 = \{e\}$, and $H_{k+1} = G$, define
\begin{align*} &Q_j = \prod_{L \in \overline{H_j^*}}(A_{H_j,S})_{\widehat{\mathcal{T}}_{H_j,S}}/I_{H_j,S}[[u_L | L \in R_j]]/(u_L +_F u_M = u_{LM})[[x_L]]; \\
&T_j = \prod_{L \in \overline{H_{j+1}^*}}(A_{H_j,S})_{\widehat{\mathcal{T}}_{H_j,S}}/I_{H_j,S}[[u_L | L \in R_j]]/(u_L +_F u_M = u_{LM})[u_L^{-1} | L \in R_j][[x_L]].
\end{align*}
$Q^k$ is defined similarly, also using the computations in the proof of \cite[Theorem 2]{AKprep}, as
\[
Q^k = (A_{G,S})_{\widehat{T}_{G,S}}/I_{G,S}[[x_L]].
\] 
Here $A^* = \text{Hom}(A,S^1)$ and $\overline{A} = A-\{0\}$. We then define $\mathcal{N}_S$ to be the diagram 
\begin{equation} \xymatrix{& & & & Q^k \ar[d]^{\phi^{k-1}} \\
& & &  Q_{k-1} \ar[r]^{\psi_{k-1}} \ar[d]^{\phi_{k-2}} & T_{k-1} \\
& & Q_2 \ar[d]^{\phi_1} \ar[r]^\cdots & T_{k-2} & & \\
& Q_1 \ar[d]^{\phi_0} \ar[r]^{\psi_1} & T_1 & & \\
Q_0 \ar[r]^{\psi_0} & T_0, & & &\\
\\}
\end{equation}
where the horizontal maps are given by localization by inverting Euler classes and the condition
\begin{equation} x_L \mapsto \prod_{M \equiv L (\bmod H_j)} x_M +_F (u_L - u_M),
\end{equation}
and the vertical maps are determined by sending $u_L^{(i)}$ to the coefficient of $x^i$ in $x +_F u_L$ and by sending
\begin{equation} x_L \mapsto x_L.
\end{equation}

Define another diagram $\widetilde{\mathcal{N}}_S$ as 
\begin{equation} \xymatrix{& & & & \widetilde{Q}^k \ar[d]^{\widetilde{\phi}^{k-1}} \\
& & &  \widetilde{Q}_{k-1} \ar[r]^{\widetilde{\psi}_{k-1}} \ar[d]^{\widetilde{\phi}_{k-2}} & \widetilde{T}_{k-1} \\
& & \widetilde{Q}_2 \ar[d]^{\widetilde{\phi}_1} \ar[r]^\cdots & \widetilde{T}_{k-2} & & \\
& \widetilde{Q}_1 \ar[d]^{\widetilde{\phi}_0} \ar[r]^{\widetilde{\psi}_1} & \widetilde{T}_1 & & \\
\widetilde{Q}_0 \ar[r]^{\widetilde{\psi}_0} & \widetilde{T}_0, & & &\\
\\}
\end{equation}
where 
\begin{align*} &\widetilde{Q}_j = \prod_{L \in \overline{H_j^*}}(A_{H_j,S})_{\widehat{\mathcal{T}}_{H_j,S}}/I_{H_j,S}[[u_L | L \in R_j]]/(u_L +_F u_M = u_{LM})[[y_L,z_L]]; \\
&\widetilde{T}_j = \prod_{L \in \overline{H_{j+1}^*}}(A_{H_j,S})_{\widehat{\mathcal{T}}_{H_j,S}}/I_{H_j,S}[[u_L | L \in R_j]]/(u_L +_F u_M = u_{LM})[u_L^{-1} | L \in R_j][[y_L,z_L]],
\end{align*}
and 
\[\widetilde{Q}^k = Q^k = (A_{G,S})_{\widehat{T}_{G,S}}/I_{G,S}[[y_L, z_L]].
\]
The maps $\widetilde{\phi}_j$ and $\widetilde{\psi}_j$ are defined as were $\phi_j$, and $\psi_j$, with the analogous conditions that
\begin{equation} \widetilde{\psi}_j (y_L) = \prod_{M \equiv L (\bmod H_j)} y_M +_F (u_L - u_M),
\end{equation}
\begin{equation} \widetilde{\psi}_j (z_L) = \prod_{M \equiv L (\bmod H_j)} z_M +_F (u_L - u_M),
\end{equation}
\begin{equation} \widetilde{\phi}_j(y_L) = y_L, \text{ and } \widetilde{\phi}_j(z_L) = z_L.
\end{equation}
Let $\mathbb{C}P^\infty_G = \mathbb{C}P(\mathcal{U})$ denote the complex projective space on the complete $G$-universe. We are now ready to state our main theorem.

\begin{theorem} \label{maintheorem} The equivariant formal group law of $MU_G$ consists of the following structures:
\begin{enumerate}[(a)]
\item the commutative ring $k = MU_G^*$, whose algebraic description is given by Theorem ~\ref{ringcomp}.
\item the $k$-algebra $R = \text{ho} \varprojlim \mathcal{N}_S$;
\item the ideal $I = (\prod_{L \in \widehat{G}} x_L)$;
\item the $k$-algebra $R \widehat{\otimes} R = \text{ho} \varprojlim \widetilde{\mathcal{N}}_S$;
\item the coproduct $\Delta: R \rightarrow R \widehat{\otimes} R$ is determined by maps $\Delta_S: \mathcal{N}_S \rightarrow \widetilde{\mathcal{N}}_S$ of diagrams, which send $Q_j \rightarrow \widetilde{Q}_j$, $T_j \rightarrow \widetilde{T}_j$, and are determined by the identity maps away from power series variables and by the conditions
\begin{equation*} x_L \mapsto \prod_{MN \equiv L (\bmod H_j)} (y_M +_F z_N);
\end{equation*}
\item the map $\epsilon: R \rightarrow (MU_G^*)^{G^*}$ is defined by choosing a basepoint $*_L$ in each connected component of 
\[ (\mathbb{C}P^\infty_G)^G = \coprod_{L \in \widehat{G}} \mathbb{C}P^\infty,
\]
where the superscript $G$ denotes fixed points.
$\epsilon$ is the induced map in cohomology of the $G$-equivariant map
\[\coprod_{L \in \widehat{G}} *_L \rightarrow G.
\]
\end{enumerate}
\end{theorem}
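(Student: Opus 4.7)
The strategy is to apply the machine of \cite{CGK00}: a complex orientation on $MU_G$ automatically produces an equivariant formal group law in which $R = MU_G^* \mathbb{C}P^\infty_G$, the coproduct $\Delta$ is induced in cohomology by the tensor-product classifying map $\mu: \mathbb{C}P^\infty_G \times \mathbb{C}P^\infty_G \to \mathbb{C}P^\infty_G$, the ideal $I$ is generated by Euler classes of the tautological bundles, and the counit $\epsilon$ is induced by the inclusion of $G$-fixed basepoints. The theorem therefore reduces to two cohomology computations: identifying $MU_G^* \mathbb{C}P^\infty_G$ with $\text{ho}\varprojlim \mathcal{N}_S$ and identifying $MU_G^*(\mathbb{C}P^\infty_G \times \mathbb{C}P^\infty_G)$ with $\text{ho}\varprojlim \widetilde{\mathcal{N}}_S$. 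Once these are in hand, items $(c)$, $(e)$, and $(f)$ come out by unwinding the geometric content of each structure.

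For the computation of $R$, the plan is to adapt the subgroup-chain filtration of \cite{AKprep} from $S^0$ to the $G$-space $\mathbb{C}P^\infty_G$. For each chain $S \in P(G)$ the factor $Q_j$ records the contribution of the stratum of generic isotropy $H_j$, while $T_j$ is its localization across the jump $H_j \subsetneq H_{j+1}$; the variables $u_L$ and $u_L^{(i)}$ are inherited directly from Theorem \ref{ringcomp}, and the new power series variables $x_L$ are realized as the Euler classes $e(L)$ of the tautological $L$-twisted line bundles over the relevant fixed-point component. The horizontal map $x_L \mapsto \prod_{M \equiv L (\bmod H_j)} x_M +_F (u_L - u_M)$ then encodes, in a single formula, both the splitting of the irreducible $G$-character $L$ into $H_j$-characters on restriction and the comparison of Euler classes across neighbouring strata, which differ by translation by $u_L - u_M$ in the formal group. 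Convergence of the inverse limit to the cohomology of $\mathbb{C}P^\infty_G$ is controlled by the completion topologies $\mathcal{T}_S$ already constructed in \cite{AKprep}, applied now to the isotropy-filtration spectral sequence for $\mathbb{C}P^\infty_G$ rather than for the sphere.

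The computation of $R \widehat{\otimes} R$ is the same argument applied to $\mathbb{C}P^\infty_G \times \mathbb{C}P^\infty_G$, with two copies $y_L, z_L$ of the Euler class variables pulled back along the two projections; a K\"unneth argument, available because $MU_G$ is complex oriented, matches $\text{ho}\varprojlim \widetilde{\mathcal{N}}_S$ with $MU_G^*(\mathbb{C}P^\infty_G \times \mathbb{C}P^\infty_G)$. The formula $\Delta(x_L) = \prod_{MN \equiv L} (y_M +_F z_N)$ is then forced by the identity $\mu^* e(L) = e(M) +_F e(N)$ applied to each pair $(M,N)$ with $MN = L$, repackaged stratum by stratum. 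The formula for $\epsilon$, and its compatibility with $\Delta$, follow directly from the identification $(\mathbb{C}P^\infty_G)^G = \coprod_{L \in \widehat{G}} \mathbb{C}P^\infty$ together with naturality of fixed-point inclusion, while the regularity of $x_L$ and the isomorphism $R/(x_L) \cong MU_G^*$ come from the Gysin cofiber sequence of a basepoint inclusion into $\mathbb{C}P(V)$ for a complete universe $V$.

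The principal obstacle is the first step. Pinning down the Euler-class transformation rule $x_L \mapsto \prod x_M +_F (u_L - u_M)$ is the place where equivariant line-bundle geometry meets the algebra of \cite{AKprep} most nontrivially, and it is the point where any discrepancy between the coefficient-ring pattern and the cohomology of $\mathbb{C}P^\infty_G$ would surface. Verifying it cleanly will likely require reworking the isotropy-filtration cofiber sequences of \cite{AKprep} in the presence of the tautological line bundle, and checking that the completion topology $\mathcal{T}_S$ still captures convergence once the variables $x_L$ are added; the K\"unneth identification needed in $(d)$ is then a relatively formal extension of the same technique.
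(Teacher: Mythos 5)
Your proposal follows essentially the same route as the paper: both invoke \cite{CGK00} to extract the formal group law structure from the canonical complex orientation --- so that $R = MU_G^*\mathbb{C}P^\infty_G$, the coproduct is induced by the tensor-product classifying map $\mu$, the ideal $I$ is the product of the Thom/Euler classes $x_L$ via the Thom diagonal, and $\epsilon$ comes from the fixed-point basepoints --- and both then reduce the theorem to identifying $MU_G^*\mathbb{C}P^\infty_G$ and $MU_G^*(\mathbb{C}P^\infty_G\times\mathbb{C}P^\infty_G)$ with the stated homotopy inverse limits by way of Theorem~\ref{ringcomp}. The one substantive divergence is at the step you flag as the principal obstacle: rather than re-running the isotropy filtration of \cite{AKprep} on $\mathbb{C}P^\infty_G$ with the tautological bundle present, the paper invokes Cole's Splitting Theorem (\cite[Theorem 4.3]{CGK00}) to write $MU_G^*\mathbb{C}P^\infty_G \cong MU_G^*\{\{x_0,x_{L_1},x_{L_1L_2},\ldots\}\}$ for a flag basis of the complete universe, and then pins down the images of the variables termwise by computing in Borel cohomology (where $x_L = x_0 +_F u_L$) and in geometric fixed points (where \cite[Theorem 11.2]{Greenlees01} gives the product decomposition $\prod_{L}\Phi^G MU_G^*[[x +_F u_L]]$); the identification of $R$ is then deduced as a formal consequence of the coefficient-ring computation rather than by a fresh spectral-sequence argument, and the translation rule $x_L \mapsto \prod_{M \equiv L} x_M +_F (u_L - u_M)$ falls out of $x_L = x_0 +_F u_L$ together with restriction of characters. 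Your K\"unneth and Gysin remarks for $R\widehat{\otimes}R$ and for conditions (1)--(3) of the definition match what the paper attributes to a direct computation and to \cite{CGK00}, so the two arguments differ only in how the key cohomology identification is organized, with the paper's use of the Splitting Theorem buying a shorter path past the step you expected to be hardest.
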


\begin{proof} This is accomplished by reconciling \cite{CGK00} with Theorem ~\ref{ringcomp}. Since $MU_G^*$ is complex stable and complex oriented, we can take $k = MU_G^*$ and
\begin{equation} \label{RforGfinab}  R = MU_G^* \mathbb{C}P_G^\infty.
\end{equation}
 The elements $x_L \in \widetilde{MU}_G^2 T(\gamma_G \otimes L)$ are Thom classes, computed just as in \cite[Section 4]{CGK00}, where $\gamma_G$ is the canonical line bundle on $\mathbb{C}P_G^\infty$ and $T$ denotes the Thom space. Let $x_0 \in \widetilde{MU}_G^2 T(\gamma_G)$ be the orientation class. Now let $\phi: \mathbb{C}P^\infty_G \rightarrow \mathbb{C}P^\infty_G$ classify $\gamma_G \otimes L$, i.e. $\phi^*(\gamma_G) = \gamma_G \otimes L$. Then we define
\begin{equation} x_L = \text{Im} (\widetilde{MU}_G^2T(\gamma_G) \rightarrow^{\widetilde{MU}_G^2T\phi} \widetilde{MU}_G^2T(\gamma_G \otimes L)).
\end{equation}
Let $U = \bigotimes_{L \in \widehat{G}} L$.
The ideal $I$ is 
\begin{equation} I = \left(\prod_{L \in \widehat{G}} x_L\right),
\end{equation}
where the product on the right is computed by the Thom diagonal
\[\Delta_t: T(\gamma_G \otimes U) \rightarrow \bigwedge_{L \in \widehat{G}} T (\gamma_G \otimes L),
\] 
as proposed in Theorem ~\ref{maintheorem}(b).

It follows from the Splitting Theorem \cite[Theorm 4.3]{CGK00} of Cole that $R = MU_G^* \mathbb{C}P^\infty_G$ is complete at $I$, and 
\begin{equation}R \widehat{\otimes} R \cong MU_G^*(\mathbb{C}P^\infty_G \times \mathbb{C}P^\infty_G).
\end{equation}
The comultiplication $\Delta$ is induced by the map classifying the tensor multiplication of line bundles:
\[\mu: \mathbb{C}P^\infty_G \times \mathbb{C}P^\infty_G \rightarrow \mathbb{C}P^\infty_G,
\]
i.e. for line bundles $\epsilon = f^* \gamma_G$, $\omega = g^* \gamma_G$, $\epsilon \otimes \omega = (\mu(f \times g))^*$. Choosing basepoints $*_L$ in each connected component of $(\mathbb{C}P^\infty_G)^G$, the description of the map $\epsilon$ given as Theorem ~\ref{maintheorem}(e) follows from \cite{CGK00}.

The above is documented in the note \cite{Kriz} of Kriz. Our goal now is to understand better the algebraic structure of the ring $MU_G^*\mathbb{C}P^\infty_G$. By Theorem 4.3 of \cite{CGK00}, we have
\begin{equation} \label{RforGeq}
MU_G^*\mathbb{C}P^\infty_G \cong MU_G^*\{\{x_0,x_{L_1},x_{L_1L_2}, \ldots \}\},
\end{equation}
where $L_1 \oplus L_2 \oplus \cdots$ is any splitting of the complete $G$-universe $\mathcal{U}$. Thus $x_0,x_{L_1},x_{L_1L_2}, \ldots$ are a flag basis of the complete universe $\mathcal{U}$, and $MU_G^*\{\{x_0,x_{L_1},x_{L_1L_2}, \ldots \}\}$ denotes 
\begin{equation} \left\{   \sum_{i=0}^\infty a_ix_0x_{L_1} \cdots x_{L_i} \bigg| a_i \in MU_G^* \right\}.
\end{equation}
We define 
\begin{equation} x_{L_1 \oplus L_2 \oplus \cdots \oplus L_m} = \prod x_{L_i},
\end{equation}
and now the right hand side of (\ref{RforGeq}) is well-defined. 

Now the splitting map $MU \rightarrow MU_G$ induces an isomorphism
\begin{equation} \pi_*^G(F(EG_+,MU)) \cong \pi_*^G(F(EG_+,MU_G)),
\end{equation}
and it follows that 
\begin{equation} \pi_*^G(F(EG_+,MU_G)^* \mathbb{C}P_G^\infty) \cong MU_*[[u_L| L \in \overline{G^*}]]/(u_L +_F u_M = u_{LM})[[x]].
\end{equation}
We are now able to give a better description of the elements $x_L$. Clearly,
\begin{equation} x_0 = x \in MU_*[[u_L| L \in \overline{G^*}]]/(u_L +_F u_M = u_{LM})[[x]],
\end{equation}
while 
\begin{equation} x_L = x_0 +_F u_L.
\end{equation}
Greenlees \cite[Theorem 11.2]{Greenlees01} gives
\begin{equation} \Phi^{G}MU_G^*\mathbb{C}P_G^\infty \cong \prod_{L \in \overline{G^*}} \Phi^G MU_G^*[[x_L]] = \prod_{L \in \overline{G^*}} \Phi^GMU_G^*[[x +_F u_L]].
\end{equation}

Now the description of $R$ given as Theorem ~\ref{maintheorem}(a) is a formal consequence of the above. Theorem ~\ref{maintheorem}(b) concerning $R \widehat{\otimes} R$ is obtained by a direct computation, and the description of the coproduct $\Delta$ in Theorem ~\ref{maintheorem}(c) follows from the definition of $\Delta$ above and from the work of \cite{AKprep} as summarized in Section ~\ref{cobordism}. Conditions (1)-(3) of Definition ~\ref{efgl2} are guaranteed by \cite{CGK00} and \cite{Kriz}, since the definitions of the algebras, maps, and elements of the equivariant formal group law given here are direct consequences of the example of complex stable, complex oriented cohomology theories and equivariant formal group laws as given in those papers.
\end{proof}

\section{The Case $G = \mathbb{Z}/p^n$} \label{zpnefgl}

There is intricate structure hiding beneath the surface of our description of the equivariant formal group law for $MU_G$ in the previous section. To illuminate some of this hidden structure, we present here the description of $MU_{\mathbb{Z}/p^n}$ as given by Theorem ~\ref{maintheorem}. 

We give a useful description of the elements $x_j$ arising from the diagram (\ref{zpnringcompeq}). Of course $x_0 = x \in MU_*[[u]]/([p^n]_Fu)[[x]]$, and $x_j = x_0 +_F [j]_Fu$. Let $R_k$, $S_k$, $0 \leq k \leq n-1$, and $R^n$ be as in Corollary ~\ref{zpnringcomp}, and refer to that result for notation. Then the element $u_j b_j^{(i)}$ of $R^n$ maps to an element of $S_{n-1}$ that does not include the term $u_{[n-1]}^{-1}$, so this element really lives in $R_{n-1}$. For $0 < k < n$, the resulting element of $R_k$ maps to an element of $S_{k-1}$ which does not include the term $u_{[k-1]}^{-1}$, so it really lives in $R_{k-1}$. This allows us to map the elements $u_j b_j^{(i)}$ of $R^n$ to $R_0 = MU_*[[u]]/([p^n]_Fu)$; call this map $\phi$. Then there is an implied map $\phi: MU_*[u_jb_j^{(i)} | i > 0, 1 \leq j \leq p^n -1 ][[x]] \rightarrow MU_*[[u]]/([p^n]_Fu)[[x]]$. Since $u_jb_j^{(i)}$ maps to the coefficient of $x^i$ in $x +_F [j]_Fu$, $x_j$ is the image under $\phi$ of the element
\[\sum_{i=0}^\infty u_jb_j^{(i)} x^i.
\]
We would also like a nice description of the $MU_{\mathbb{Z}/p^n}^*$-algebra
\[R = MU_{\mathbb{Z}/p^n}^* (\mathbb{C}P^\infty_{\mathbb{Z}/p^n}) = MU_{\mathbb{Z}/p^n}^* \{\{ \mathcal{U} \} \}
\]
as a product. Greenlees \cite[Theorem 11.2]{Greenlees01} gives us the following:

\begin{equation} \Phi^{\mathbb{Z}/p^n} MU_{\mathbb{Z}/p^n}^*  \{ \{ \mathcal{U} \} \} = \prod_{j=0}^{p^n -1} \Phi^{\mathbb{Z}/p^n}MU_{\mathbb{Z}/p^n}^* [[x_j]] = \prod_{j=0}^{p^n -1} \Phi^{\mathbb{Z}/p^n}MU_{\mathbb{Z}/p^n}^* [[x +_F [j]_F u]].
\end{equation}

Moreover, we obtain $R$ as an $n$-fold pullback, using Corollary ~\ref{zpnringcomp}. The various powers of the Euler class which are invertible on the diagram (\ref{zpnringcompeq}) allow for certain product decompositions of the ring $R = MU_{\mathbb{Z}/p^n}^*\mathbb{C}P^\infty_{\mathbb{Z}/p^n}$. Let $R^n$, $S_k$, $R_k$ stand for the cohomology rings now, rather than homology. Then $R$ is the pullback of the following diagram of rings:

\begin{equation} \label{ZpnRdiagram} \xymatrix{& &  \prod_{k \in (\mathbb{Z}/p^n)^*} R^n[[x_k]] \ar[d]^{\phi^{n-1}} \\
& \prod_{k \in (\mathbb{Z}/p^{n-1})^*} R_{n-1}[[x_k]] \ar[r]^{\psi_{n-1}} \ar[dr]^{\cdots} & \prod_{k \in (\mathbb{Z}/p^n)^*} S_{n-1}[[x_k]] & \\
& \prod_{k \in (\mathbb{Z}/p)^*}R_1[[x_k]] \ar[d]^{\phi_0} \ar[r]^{\psi_1} & \prod_{k \in (\mathbb{Z}/p^2)^*}S_1[[x_k]]  \\
R_0 \ar[r]^{\psi_0 \text{ } \text{ } \text{ }}[[x]] & \prod_{k \in (\mathbb{Z}/p)^*}S_0[[x_k]]. & \\
\\}
\end{equation}
The horizontal maps, as implied, are induced by the maps $\psi_k$ and the condition $x_j \mapsto \prod_{r \equiv j (\bmod ~p^k)} (x_r +_F [j-r]_Fu_{[k]})$. The vertical maps are induced by the maps $\phi_k$ and the condition $x_j \mapsto x_j$ for all $j$.

There is a similar description of $R \hat{\otimes} R$ as a pullback:

\begin{equation} \label{ZpnRRdiagram} \xymatrix{& &  \prod_{k,r \in (\mathbb{Z}/p^n)^*} R^n[[y_k,z_r]] \ar[d]^{\phi^{n-1}} \\
&  \prod_{k,r \in (\mathbb{Z}/p^{n-1})^*} R_{n-1}[[y_k,z_r]] \ar[r]^{\psi_{n-1}} \ar[dr]^{\cdots} & \prod_{k,r \in (\mathbb{Z}/p^n)^*} S_{n-1}[[y_k,z_r]] \\
& \prod_{k,r \in (\mathbb{Z}/p)^*}R_1[[y_k,z_r]] \ar[d]^{\phi_0} \ar[r]^{\psi_1} & \prod_{k,r \in (\mathbb{Z}/p^2)^*}S_1[[y_k,z_r]]   \\
R_0 \ar[r]^{\psi_0 \text{ } \text{ } \text{ }}[[y,z]] & \prod_{k,r \in (\mathbb{Z}/p)^*}S_0[[y_k,z_r]]. & \\
\\}
\end{equation}

The maps are determined by the maps of (\ref{ZpnRdiagram}) and the corresponding conditions for $y_k$ and $z_r$. Namely, under the horizontal maps, $y_k \mapsto \prod_{s \equiv k (\bmod ~p^j)} (y_l +_F [k-l]_F u_{[j]})$ $z_r \mapsto \prod_{s \equiv r (\bmod ~p^j)} (z_s +_F [r-s]_F u_{[j]})$. Under the vertical maps, $y_k \mapsto y_k$ and $z_r \mapsto z_r$.

We now specify the coproduct $\Delta: R \rightarrow R \hat{\otimes} R$ on the terms of the diagrams (\ref{ZpnRdiagram}) and (\ref{ZpnRRdiagram}). The map $\prod_{k \in (\mathbb{Z}/p^j)^*} R_j [[x_k]] \rightarrow \prod_{k,r \in (\mathbb{Z}/p^j)^*} R_j [[y_k,z_r]]$ is determined by the identity map on $R_j$ and the condition $x_k \mapsto \prod_{k_1 + k_2 = k}( y_{k_1} +_F z_{k_2})$, where by $k_1 + k_2 = k$ we of course mean $k_1 + k_2 \equiv k (\mod p^j)$. The map on the top right of the diagrams is defined similarly. The map  \[\prod_{k \in (\mathbb{Z}/p^j)^*} S_{j-1} [[x_k]] \rightarrow \prod_{k,r \in (\mathbb{Z}/p^j)^*} S_{j-1} [[y_k,z_r]]\]  is determined by the identity map on $S_{j-1}$ and the condition \[x_k \mapsto \prod_{k_1 + k_2 = k} y_{k_1} +_F z_{k_2}.\] Having nothing to add to the description of the map $\epsilon$ as given for general finite abelian groups $G$, this completes our description of the equivariant formal group law corresponding to $MU_{\mathbb{Z}/p^n}$.

\section{Some Future Directions}

There are several possible avenues for future work. The only case of Greenlees's Conjecture ~\ref{Gconj} yet proved is the case $G = \{e\}$, which was proved by Quillen in \cite{Quillen69}. Thus, a natural project would be to combine the special case of Theorem ~\ref{maintheorem} when $G=\mathbb{Z}/2$ with Strickland's computation of $MU_{\mathbb{Z}/2}^*$ given in \cite{Strickland01}. This could provide a more explicit picture of the equivariant formal group law associated with $MU_{\mathbb{Z}/2}$, but to prove Greenlees's Conjecture even in this case would require further insights on the Lazard ring $L_{\mathbb{Z}/2}$.

One could also use the computation of \cite{AKprep} to find generators and relations for $MU_G^*$, analogous to Strickland's computation of $MU_{\mathbb{Z}/2}$, for other groups $G$, for instance $G = \mathbb{Z}/3$ or $G= \mathbb{Z}/4$. This could then be used to further elucidate the description of the equivariant formal group law of $MU_G$ given here. 

Again, a better understanding of the Lazard ring $L_G$ is needed, and for this one may need to consider other perspectives on equivariant formal group laws in general, analogous to the various perspectives that are well-understood for non-equivariant formal group laws. 

%
%
%

\end{document}